\documentclass{amsart}
\usepackage{amssymb,amstext,amsmath,amscd,amsthm,amsfonts,enumerate,graphicx,latexsym,stmaryrd,multicol}
\usepackage[usenames]{color}
\usepackage{setspace}
\usepackage{bm}
\usepackage[all]{xy}

\newtheorem{thm}{Theorem}[section]
\newtheorem{lem}[thm]{Lemma}
\newtheorem{prop}[thm]{Proposition}
\newtheorem{cor}[thm]{Corollary}
\theoremstyle{definition}
\newtheorem{dfn}[thm]{Definition}
\newtheorem{ques}[thm]{Question}
\newtheorem{rem}[thm]{Remark}

\theoremstyle{remark}

\newtheorem*{proof of claim}{Proof of Claim}

\numberwithin{equation}{thm}
\def\C{\mathcal{C}}
\def\cok{\operatorname{Coker}}
\def\depth{\operatorname{depth}}
\def\fd{\operatorname{fd}}
\def\Gfd{\operatorname{Gfd}}
\def\Gid{\operatorname{Gid}}
\def\Gpd{\operatorname{Gpd}}
\def\Hom{\operatorname{Hom}}
\def\id{\operatorname{id}}
\def\image{\operatorname{Im}}
\def\ker{\operatorname{Ker}}
\def\L{\mathbf{L}}
\def\m{\mathfrak{m}}
\def\p{\mathfrak{p}}
\def\pd{\operatorname{pd}}
\def\RHom{\operatorname{\mathbf{R}Hom}}
\def\spec{\operatorname{Spec}}
\def\Tor{\operatorname{Tor}}
\def\tr{\operatorname{Tr}}
\def\xx{\boldsymbol{x}}
\begin{document}
\allowdisplaybreaks
\title{Complexes of finite Gorenstein flat and injective dimensions}
\author{Kaito Kimura}
\address{Department of Mathematics, Purdue University, West Lafayette, IN 47907, USA}
\email{m21018b@gmail.com}
\thanks{2020 {\em Mathematics Subject Classification.} 13D05, 13H10}
\thanks{{\em Key words and phrases.} Gorenstein ring, Gorenstein flat dimension, Gorenstein injective dimension}
\thanks{The author was supported by JSPS Overseas Research Fellowships.}

\begin{abstract}
In this paper, we consider a Gorenstein-dimensional analogue of Foxby's characterization of Gorenstein rings. We prove that a commutative Noetherian local ring is Gorenstein if it admits a complex whose depth, Gorenstein flat dimension, and Gorenstein injective dimension are all finite. This gives an affirmative answer to the original question of Christensen, Foxby, and Holm, which had remained open in this generality even for modules, and at the same time establishes its natural extension to complexes.
\end{abstract}
\maketitle
\section{Introduction}

Throughout the present paper, all rings are assumed to be commutative and Noetherian. 
A number of classical results in commutative algebra characterize local rings by the existence of modules or complexes with suitable homological finiteness properties. Bass' conjecture gives a fundamental instance: if a local ring admits a non-zero finitely generated module of finite injective dimension, then it is Cohen--Macaulay; see \cite[Corollary 9.6.2 and Remark 9.6.4]{BH}. Foxby \cite{Fox, Fox2} proved that a local ring is Gorenstein if it admits a complex whose depth, flat dimension, and injective dimension are all finite.

Gorenstein projective, Gorenstein flat, and Gorenstein injective dimensions refine their classical counterparts. They provide a framework for extending the homological methods and favorable properties familiar over Gorenstein rings to modules and complexes over rings that need not be Gorenstein. Moreover, the finiteness of these Gorenstein dimensions leads to rich homological phenomena, including equivalences of categories in the derived category induced by dualizing complexes. In particular, many results concerning modules or complexes of finite projective, flat, or injective dimension naturally suggest corresponding Gorenstein versions. It is therefore natural to ask to what extent the classical characterizations discussed above remain valid when these classical homological dimensions are replaced by their Gorenstein analogues. For example, a Gorenstein injective analogue of Bass' conjecture was proposed by Takahashi \cite{Tak}. Christensen, Foxby, and Holm \cite{ChFoHo} posed the following question and proved that it has an affirmative answer when at least one of the flat dimension and the injective dimension of the module is finite.

\begin{ques}\cite[Question 4.29]{ChFoHo}\label{quesmod}
Must a local ring be Gorenstein if it admits a module whose depth, Gorenstein flat dimension, and Gorenstein injective dimension are all finite?
\end{ques}

This question can be viewed as a Gorenstein-dimensional analogue of Foxby's characterization of Gorenstein local rings, in which the existence of a complex is replaced by the existence of a module. There has been recent progress on this problem. Azer\^{e}do, Miranda-Neto, and Souza \cite{AMNS} used Foxby classes associated to semidualizing modules to prove an affirmative answer in the Cohen--Macaulay case under an additional non-vanishing condition on the residue fiber. More precisely, they showed that if $(R, \m, k)$ is Cohen--Macaulay and there exists an $R$-module $M$ such that $M\otimes_R k\ne 0$ and both the Gorenstein flat dimension and the Gorenstein injective dimension of $M$ are finite, then $R$ is Gorenstein. (The condition $M\otimes_R k\ne 0$ is stronger than the finiteness of the depth of $M$.) The purpose of this paper is to prove a complex version of this statement without assuming that $R$ is Cohen--Macaulay. As a consequence, we obtain the following result.

\begin{thm} [Corollary \ref{main cor}]
A local ring admitting a complex whose depth, Gorenstein flat dimension, and Gorenstein injective dimension are all finite is Gorenstein.
\end{thm}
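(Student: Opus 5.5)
Let $(R,\m,k)$ be local and $X$ a complex with $\depth_R X<\infty$, $\Gfd_R X<\infty$ and $\Gid_R X<\infty$. The plan is to reduce to the case where $R$ admits a dualizing complex, and then use Foxby equivalence to trade the Gorenstein dimensions for classical ones, so that Foxby's theorem \cite{Fox, Fox2} applies.

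\textbf{Step 1: completion.} Pass to $\widehat R$ via $X\mapsto \widehat R\otimes^{\L}_R X$. Finite depth is preserved, since $\depth$ is computed by $\RHom_R(k,-)$ and $k$ is unchanged. $R$ is Gorenstein if and only if $\widehat R$ is. The issue is Gorenstein injective dimension: it does not obviously ascend along the flat map $R\to\widehat R$.
\begin{itemize}
\item One remedy is to replace $X$ by $\RHom_R(\widehat R, X)$, or by a local cohomology or completion functor $\L\Lambda^{\m}$ or $\mathbf{R}\Gamma_{\m}$. These functors see only $\m$-torsion or $\m$-complete information, so they still have finite depth.
\item The working claim is then that both Gorenstein dimensions stay finite over $\widehat R$. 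For $\Gfd$ this should follow from the flat base change results of Christensen--Frankild--Holm. For $\Gid$ I would use the characterization via Bass classes once a dualizing complex exists.
\end{itemize}
I expect this ascent step to be the main obstacle. The first thing to try is $\mathbf{R}\Gamma_{\m}X$, which is naturally an $\widehat R$-complex.

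\textbf{Step 2: Foxby classes.} Assume $R$ has a dualizing complex $D$. By the Christensen--Frankild--Holm characterizations:
\begin{itemize}
\item $\Gfd_R X<\infty$ if and only if $X$ lies in the Auslander class $\mathcal A(R)$;
\item $\Gid_R X<\infty$ if and only if $X$ lies in the Bass class $\mathcal B(R)$.
\end{itemize}
So $X\in\mathcal A(R)\cap\mathcal B(R)$. Put $Y=D\otimes^{\L}_R X\in\mathcal B(R)$, so that $X\simeq \RHom_R(D,Y)$.

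\textbf{Step 3: showing $D$ is Gorenstein.} Since $X$ is also in $\mathcal B(R)$, we have $Y\simeq D\otimes^{\L}_R\RHom_R(D,D\otimes^{\L}_R X)$. The aim is to prove that $\RHom_R(D,D)\simeq R$ together with the class conditions forces $D\otimes^{\L}_R -$ to preserve $\mathcal A\cap\mathcal B$. Iterating, $D^{\otimes n}\otimes^{\L}_R X$ stays bounded for all $n$. Then I would apply $k\otimes^{\L}_R-$, or compute depth via $\RHom_R(k,-)$. Finiteness of $\depth X$ gives a nonzero $\Tor$- or $\operatorname{Ext}$-module over $k$, and the Poincaré series satisfies
\[
P_{D\otimes^{\L}X}(t) = P_D(t)\,P_X(t).
\]
Boundedness of all these tensor powers forces $P_D(t)$ to be a monomial. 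Hence $D$ is a shift of $R$, so $R$ is Gorenstein. The precise argument I would aim for is that $\depth(D\otimes^{\L}_R X)=\depth X+\dim R$-type equalities, applied along the iterated tensor powers, force the type of $D$ to be $1$.

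An alternative for Step 3 is to reduce to Foxby's theorem directly. If $X\in\mathcal A\cap\mathcal B$, one shows that the complex $\RHom_R(D,X)$, or a suitable truncation, has finite flat dimension and finite injective dimension with finite depth; Foxby \cite{Fox2} then gives that $R$ is Gorenstein. I would try this route first, since it uses the most established tools.
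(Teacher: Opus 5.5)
Your proposal has a genuine gap at Step~3, which is where all the content of the theorem lies.

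The claim that $D\otimes^{\L}_R-$ preserves $\mathcal A(R)\cap\mathcal B(R)$ is stated only as an aim. Your identity $Y\simeq D\otimes^{\L}_R\RHom_R(D,D\otimes^{\L}_RX)$ holds simply because $X\in\mathcal A(R)$, so the hypothesis $\Gid_RX<\infty$ never actually enters.

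Even granting that every $D^{\otimes n}\otimes^{\L}_RX$ is homologically bounded, the Poincar\'e-series step does not follow. The factorization $k\otimes^{\L}_R(D\otimes^{\L}_RX)\simeq(k\otimes^{\L}_RD)\otimes_k(k\otimes^{\L}_RX)$ gives a contradiction only if $k\otimes^{\L}_R$ of these complexes is known to be bounded. That is a flat-dimension-type condition, and neither homological boundedness nor finite Gorenstein dimensions provide it. For example, over a non-regular Gorenstein ring $k$ lies in $\mathcal A\cap\mathcal B$, while $k\otimes^{\L}_Rk$ is unbounded.

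The ``depth plus $\dim R$ forces type one'' heuristic is likewise unsupported. Your alternative route, producing a complex of finite flat and finite injective dimension, is equivalent to the conclusion by Foxby's theorem. Nothing you describe upgrades the Gorenstein-dimension finiteness of $\RHom_R(D,X)$ to classical finiteness.

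The missing idea is the paper's reduction to the freeness of a finitely generated module. Let $F$ be a minimal free resolution of $D$ and set $M=\cok(\partial^F_2)$. Use the triangle $M\to A\to D\to M[1]$, where $A$ is a bounded complex of finite free modules. Then $\Gid_R(A\otimes^{\L}_RX)<\infty$ follows from $\Gid_RX<\infty$. Also $\Gid_R(D\otimes^{\L}_RX)<\infty$ follows from $\Gfd_RX<\infty$ via Foxby equivalence. Hence $\Gid_R(M\otimes^{\L}_RX)<\infty$.

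A hard truncation of an injective resolution then yields a \emph{module} $L$ with $\Gid_RL<\infty$. Since $L$ is in the Bass class, $\inf\RHom_R(D,L)\ge 0$, which translates into $\Tor_1^R(\tr M,L)=0$. Now apply Lemma~\ref{apdtfreelem} with $N=H_0(X)$ (after shifting so that $\inf\{i\mid H_i(X)\neq 0\}=0$) and the natural map $L\to H_0(M\otimes^{\L}_RX)\cong M\otimes_RN$. This shows $M$ is free, so $\pd_RD<\infty$ and $R$ is Gorenstein.

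This argument needs $H_0(X)\otimes_Rk\neq0$. That is why the paper first replaces $X$ by $X\otimes^{\L}_RK$, where $K$ is the Koszul complex on generators of $\m$; this is non-acyclic because $\depth_RX<\infty$. The same step removes your Step~1 obstacle. The new complex has $k$-vector-space homology, so $\mathbf{R}\Gamma_{\m}$ does not change it over $R$, and one gets $\Gid_{\widehat{R}}Z\le\Gid_RZ<\infty$ directly.
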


This gives an affirmative answer to Question \ref{quesmod}, and more generally to its complex version. The strategy is to reduce the problem to showing that a certain finitely generated module arising from a free resolution of a dualizing complex is free. The key lemma needed for this reduction is a generalization of the argument that played an essential role in the proof of the theorem of Azer\^{e}do, Miranda-Neto, and Souza. 
The notation and all the results of this paper are presented in the next section.

\section{Results}

We first introduce the notation that will be used throughout this paper.

\begin{dfn}
Let $R$ be a ring and $X=(\cdots \to X_n\xrightarrow{\partial_n} X_{n-1} \xrightarrow{\partial_{n-1}} \cdots)$ a complex of $R$-modules.
We denote isomorphisms by the symbol $\cong$ and quasi-isomorphisms by the symbol $\simeq$.
We omit subscripts/superscripts if there is no ambiguity.

(1) For each integer $m$, $X[m]$ denotes the complex $X$ shifted $m$ degrees, defined by $(X[m])_n=X_{n-m}$. A complex $X$ is called \textit{bounded to the left} if $X_n=0$ for all $n\gg 0$; similarly, $X$ is called \textit{bounded to the right} if $X_n=0$ for all $n\ll 0$. A complex which is bounded to the left and to the right is said to be \textit{bounded}. When $X$ is quasi-isomorphic to a bounded complex, we say that $X$ is \textit{homologically bounded}, and when all homology modules of $X$ vanish, we say that $X$ is \textit{exact} (or \textit{acyclic}). The category of complexes of $R$-modules is denoted by $\C(R)$.
Given a full subcategory $\mathcal{A}(R)$ of $\C(R)$, we write $\mathcal{A}_{\sqsubset}(R)$, $\mathcal{A}_{\sqsupset}(R)$, and $\mathcal{A}_{\square}(R)$ for the subcategories of $\mathcal{A}(R)$ consisting of complexes that are bounded to the left, bounded to the right, and bounded, respectively.

(2) Let $M$ be an $R$-module.
If there exists an exact complex $X$ of projective $R$-modules such that $M\cong \image \partial_0$ and the complex $\Hom_R(X,P)$ is exact for any projective $R$-module $P$, then $M$ is called \textit{Gorenstein projective}.
If there exists an exact complex $X$ of flat $R$-modules such that $M\cong \image \partial_0$ and the complex $I\otimes_R X$ is exact for any injective $R$-module $I$, then $M$ is said to be \textit{Gorenstein flat}.
If there exists an exact complex $X$ of injective $R$-modules such that $M\cong \image \partial_0$ and the complex $\Hom_R(I,X)$ is exact for any injective $R$-module $I$, then $M$ is said to be \textit{Gorenstein injective}.
We denote by $\C^\mathrm{P}(R)$, $\C^\mathrm{F}(R)$, $\C^\mathrm{I}(R)$, 
$\C^\mathrm{GP}(R)$, $\C^\mathrm{GF}(R)$, and $\C^\mathrm{GI}(R)$ the full subcategories of $\C(R)$ consisting of complexes whose terms are projective, flat, injective, Gorenstein projective, Gorenstein flat, and Gorenstein injective modules, respectively.
Projective, flat, and injective modules are also Gorenstein projective, Gorenstein flat, and Gorenstein injective, respectively.

(3) We define the following homological dimensions:
\begin{itemize}
\item $\pd_R X:={\rm inf}\{{\rm sup}\{n \mid P_n\ne 0\} \mid P\simeq X\in \C_{\sqsupset}^\mathrm{P}(R)\}$ is the \textit{projective dimension};
\item $\fd_R X:={\rm inf}\{{\rm sup}\{n \mid F_n\ne 0\} \mid F\simeq X\in \C_{\sqsupset}^\mathrm{F}(R)\}$ is the \textit{flat dimension};
\item $\id_R X:={\rm inf}\{{\rm sup}\{n \mid I_{-n}\ne 0\} \mid X\simeq I\in \C_{\sqsubset}^\mathrm{I}(R)\}$ is the \textit{injective dimension};
\item $\Gpd_R X:={\rm inf}\{{\rm sup}\{n \mid P_n\ne 0\} \mid P\simeq X\in \C_{\sqsupset}^\mathrm{GP}(R)\}$ is the \textit{Gorenstein projective dimension};
\item $\Gfd_R X:={\rm inf}\{{\rm sup}\{n \mid F_n\ne 0\} \mid F\simeq X\in \C_{\sqsupset}^\mathrm{GF}(R)\}$ is the \textit{Gorenstein flat dimension};
\item $\Gid_R X:={\rm inf}\{{\rm sup}\{n \mid I_{-n}\ne 0\} \mid X\simeq I\in \C_{\sqsubset}^\mathrm{GI}(R)\}$ is the \textit{Gorenstein injective dimension}.
\end{itemize}
A complex with one of these homological dimensions finite is homologically bounded. When $(R,\m,k)$ is local, the \textit{depth} of $X$ is defined as follows: $$\depth_R X=-{\rm sup}\{i\mid H_i(\RHom_R(k,X))\ne 0\}.$$

(4) We set $(-)^\ast=\Hom_R(-,R)$. For an $R$-module $M$, denote by $E_R(M)$ the injective hull of $M$. When $R$ is local, and $M$ is finitely generated, for a minimal free resolution $(\cdots\to F_1\xrightarrow{d}F_0\to0)$ of $M$, the \textit{(Auslander) transpose} $\tr M$ of $M$ is defined as $\cok(d^\ast: (F_0)^\ast \to (F_1)^\ast)$. Throughout this paper, whenever we say that $D$ is a dualizing complex over $R$, we take $D$ to be of the following form:
$$
D_{-i}\cong \bigoplus_{\substack{\p\in\spec R \\ \dim R/\p=d-i}} E_R(R/\p) \quad \text{for all}\ 0\le i\le d, \ \text{where}\ d=\dim R.
$$
\end{dfn}

The Auslander and Bass classes characterize, respectively, complexes of finite Gorenstein flat dimension and complexes of finite Gorenstein injective dimension in the derived category, and the dualizing complex induces an equivalence between these two classes; see \cite{CFrH} for instance. For the purposes of this paper, we record only the consequences of this theory that will be used later.

\begin{rem}\label{Well-known fact}
Let $R$ be a ring with a dualizing complex $D$, and let $X$ be a homologically bounded complex of $R$-modules. It follows from \cite[1.3, and Theorems 4.1 and 4.4]{CFrH} that $X$ has finite (Gorenstein) flat dimension if and only if $X\otimes_R^{\L} D$ has finite (Gorenstein) injective dimension, and that $X$ has finite (Gorenstein) injective dimension if and only if $\RHom_R(D,X)$ has finite (Gorenstein) projective dimension. This correspondence is called \textit{Foxby equivalence}. For each of the homological dimensions considered here, finiteness satisfies the usual two-out-of-three property with respect to exact triangles; see \cite[Propositions 9.2.15 and 9.3.25]{CFoH}. In particular, by induction on the number of non-zero components of the complex, tensoring with a bounded complex of free modules preserves the finiteness of (Gorenstein) homological dimensions.
\end{rem}

This paper is motivated by the following result of Foxby and by Question \ref{quesmod} which is widely recognized, but has remained open since its formulation.

\begin{prop}\cite[Propositions 2.8 and 2.10]{Fox2}\label{fdid}
Let $R$ be a local ring. If there exists a complex $X$ of $R$-modules with $\depth_R X<\infty$, $\fd_R X<\infty$, and $\id_R X<\infty$, then $R$ is Gorenstein.
\end{prop}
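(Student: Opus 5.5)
The plan is to reduce first to the complete case and then use Foxby equivalence with the dualizing complex. Let $\widehat{R}$ be the $\m$-adic completion. Since $\widehat{R}$ is faithfully flat over $R$, the complex $X\otimes_R\widehat{R}$ still has finite flat dimension. Finite depth is also preserved, because $\RHom_R(k,X)\otimes_R\widehat R\simeq \RHom_{\widehat R}(k,X\otimes_R\widehat R)$ and $k\otimes_R\widehat R\cong k$. Finite injective dimension is more delicate under completion. To avoid that issue, I would replace $X$ by $Y=\RHom_R(\widehat R,X)$, or more simply by $\RHom_R(K,X)$ with $K$ the Koszul complex on a system of parameters. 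The homology of $\RHom_R(K,X)$ has finite length and is $\m$-torsion, and it still has finite flat and injective dimension by the two-out-of-three remark. Complexes of this type carry natural $\widehat R$-structures, and their homological dimensions transfer. I expect the cleanest route is to assume $R$ complete, so that a dualizing complex $D$ of the normalized form exists, and to defer the completion bookkeeping to standard facts.

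With $R$ complete and $D$ dualizing, Remark~\ref{Well-known fact} shows the following. Since $\fd_R X<\infty$, the complex $X\otimes^{\L}_R D$ has finite injective dimension. Since $\id_R X<\infty$, the complex $\RHom_R(D,X)$ has finite projective dimension, hence finite flat dimension. We also have $X\simeq \RHom_R(D,X\otimes^{\L}_RD)$ and $X\simeq D\otimes^{\L}_R\RHom_R(D,X)$, because $X$ lies in both the Auslander and the Bass classes. Put $Z=\RHom_R(D,X)$. Then $\fd_R Z<\infty$, and $D\otimes^{\L}_RZ\simeq X$ has finite injective dimension. Composing the equivalences once more shows that $Z$ itself has finite injective dimension. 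Indeed, $Z$ lies in the Auslander class, being of finite flat dimension, and $Z\otimes^{\L}D\simeq X$ lies in the Bass class. Equivalently, we may apply Remark~\ref{Well-known fact} directly to $D\otimes^{\L} Z$: it has finite injective dimension, so $\RHom(D,D\otimes^{\L}Z)\simeq Z$ has finite projective dimension, and we also want $\id_R Z<\infty$. The real target is to produce a complex of finite depth with both $\fd$ and $\id$ finite, and then invoke Proposition~\ref{fdid}.

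Concretely, the candidate is $W=X\otimes^{\L}_RD$. It has finite injective dimension. Its flat dimension is finite if and only if $X\otimes^{\L}D\otimes^{\L}D$ has finite injective dimension, which is not automatic. The alternative is to use $X$ directly. The flat-dimension side follows from $\fd_R X<\infty$, and $\Gid_R X<\infty$ gives $\pd_R\RHom(D,X)<\infty$. Then $X\simeq D\otimes^{\L}P$ with $P$ a bounded complex of finitely generated free modules after passing to $K\otimes-$ with finite-length homology. From this, $\fd_R X<\infty$ forces the finiteness of $\fd_R D\otimes^{\L}_R(K\otimes P)$, where $K\otimes P$ is a nonzero perfect complex because the depth is finite. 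Finite flat dimension of $D\otimes^{\L}Q$ for a nonzero perfect $Q$ should force $\fd_R D<\infty$, for example by taking $\RHom(Q,-)$ and using thick-subcategory arguments, or Nakayama on $k\otimes^{\L}$. Once $\fd_R D<\infty$, the dualizing complex $D$ itself has finite depth, finite flat dimension, and finite injective dimension, so Proposition~\ref{fdid} makes $R$ Gorenstein.

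I expect the main obstacle to be the step from $\fd(D\otimes^{\L}Q)<\infty$ with $Q$ perfect and nonzero to $\fd D<\infty$. I would first try to reduce to the minimal free resolution of $D$ and show that a suitable syzygy module, namely the finitely generated module coming from the resolution, is free, using the transpose $\tr$ and a vanishing of $\mathrm{Ext}$ against $R$.
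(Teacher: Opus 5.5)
Your proposal has a genuine gap at a step you treat as routine: the claim that, after tensoring with a Koszul complex $K$, the complex $X$ has finite-length homology and $\RHom_R(D,X)$ becomes a bounded complex of finitely generated free modules. Nothing in the hypotheses makes the homology of $X$ finitely generated. $K\otimes_R X$ only has homology annihilated by the ideal generated by the Koszul sequence.

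For example, take $R$ Gorenstein and $X=R^{(\mathbb{N})}$. Then $X$ satisfies all the hypotheses and $D\simeq R$. But $\RHom_R(D,K\otimes_R X)\simeq K\otimes_R X$ has non-finitely generated homology, so it is not perfect. Thus $P=\RHom_R(D,X)$ is only known to have finite projective dimension. Your justification ``nonzero because the depth is finite'' also does not give what the argument needs, namely $k\otimes^{\L}_R P\not\simeq 0$. For non-finitely generated $P$ this does not follow from $P\not\simeq 0$; take $P=R_x$ with $x\in\m$ non-nilpotent.

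There are further problems:
\begin{itemize}
\item Your final step invokes Proposition~\ref{fdid}, which is the statement being proved, so the argument is circular. Use instead that $\pd_R D<\infty$ forces $R$ to be Gorenstein, as at the end of the proof of Theorem~\ref{GfdGid}.
\item The assertion that $\RHom_R(D,X)$ has finite injective dimension is unsupported, though you never use it.
\item The reduction to the complete case is deferred to ``standard facts'', although for injective dimension it is not formal.
\end{itemize}

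The step you flag as the main obstacle is actually the easy one. The missing idea is the K\"unneth and tensor-evaluation computation that you only allude to; no perfectness or transpose argument is needed. Set $P=\RHom_R(D,X)$. Tensor evaluation gives $\RHom_R(k,X)\simeq\RHom_R(k,D)\otimes^{\L}_R P\simeq (k\otimes^{\L}_R P)[-d]$, so finite depth yields $k\otimes^{\L}_R P\not\simeq 0$. Next, $k\otimes^{\L}_R X\simeq (k\otimes^{\L}_R D)\otimes_k(k\otimes^{\L}_R P)$ is homologically bounded because $\fd_R X<\infty$. This forces $\Tor^R_i(k,D)=0$ for $i\gg 0$, that is, $\pd_R D<\infty$.

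Better still, you need neither completion nor a dualizing complex. Since $\fd_R X<\infty$, one has $\RHom_R(k,X)\simeq\RHom_R(k,R)\otimes^{\L}_R X\simeq \RHom_R(k,R)\otimes_k(k\otimes^{\L}_R X)$. Its homology is bounded because $\id_R X<\infty$ and nonzero because $\depth_R X<\infty$. Hence some $\Tor^R_t(k,X)\neq 0$, which forces $\operatorname{Ext}^j_R(k,R)=0$ for $j\gg 0$, that is, $\id_R R<\infty$.

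This is essentially Foxby's argument. The paper gives no proof of its own here, only the citation, and recovers the statement as a special case of Corollary~\ref{main cor}. The transpose lemma (Lemma~\ref{apdtfreelem}) is needed there precisely because this tensor-evaluation isomorphism is unavailable when only the Gorenstein flat dimension is finite.
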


Azer\^{e}do, Miranda-Neto, and Souza gave an affirmative answer to Question \ref{quesmod}, under mild additional assumptions, in the case where the ring is Cohen--Macaulay. Although their proof does not explicitly invoke \cite[Lemma 2.3]{Kim}, its essential argument is, in effect, parallel to that lemma: it uses the same mechanism to show that the first syzygy of the canonical module is free. We adapt this strategy to the setting of dualizing complexes. This, however, is far from a formal analogy. To overcome a serious obstruction that arises in the proof, we first generalize \cite[Lemma 2.3]{Kim}. Indeed, that lemma is recovered from the following one by taking $L=M\otimes_R N$ and letting $\pi$ be the identity map.

\begin{lem}\label{apdtfreelem}
Let $(R,\m,k)$ be a local ring, and let $M$ be a finitely generated $R$-module. Then $M$ is free if there exist $R$-modules $L$ and $N$ satisfying the following conditions:
\begin{enumerate}[\rm(1)]
\item There is a homomorphism $\pi: L\to M\otimes_R N$, and one has $N\otimes_R k\ne 0$ and $\Tor_1^R(\tr M, L)=0$;
\item For every $n\in N$ and the natural homomorphism $\phi: M \to M\otimes_R N: m\mapsto m\otimes n$, there exists $\psi: M\to L$ such that $\phi=\pi \circ\psi$.
\end{enumerate}
\end{lem}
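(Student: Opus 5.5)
The plan is to turn the Tor-vanishing in (1) into a factorization of every homomorphism $M\to L$ through $M^\ast$. This forces $M$ to have a free direct summand, and an induction on the minimal number of generators $\mu(M)$ then finishes the proof.

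\emph{Step 1: maps $M\to L$ factor through $M^\ast$.} Let $F_1\xrightarrow{d}F_0\to M\to 0$ be the start of a minimal free presentation. Dualizing gives the exact sequence $0\to M^\ast\to F_0^\ast\xrightarrow{d^\ast}F_1^\ast\to\tr M\to0$. Tensor it with $L$ and compare with $\Hom_R(M,L)=\ker(\Hom_R(F_0,L)\to\Hom_R(F_1,L))=\ker(F_0^\ast\otimes_RL\to F_1^\ast\otimes_RL)$. This yields the standard exact sequence
$$M^\ast\otimes_R L\xrightarrow{\ \theta\ }\Hom_R(M,L)\to\Tor_1^R(\tr M,L)\to0,$$
where $\theta(f\otimes l)=(m\mapsto f(m)l)$. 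Since $\Tor_1^R(\tr M,L)=0$, the map $\theta$ is surjective. So every $\psi\colon M\to L$ has the form $\psi(m)=\sum_i f_i(m)l_i$ with $f_i\in M^\ast$ and $l_i\in L$.

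\emph{Step 2: $M\neq0$ has a free summand.} Since $N\otimes_Rk\ne0$, choose $n\in N$ whose image in $N\otimes_Rk$ is nonzero, and choose $x\in M\setminus\m M$. Because $(M\otimes_RN)\otimes_Rk\cong(M\otimes_Rk)\otimes_k(N\otimes_Rk)$, the element $x\otimes n$ does not lie in $\m(M\otimes_RN)$. By (2) there is $\psi$ with $\phi=\pi\circ\psi$. By Step 1,
$$x\otimes n=\phi(x)=\sum_i f_i(x)\,\pi(l_i).$$
If $f(M)\subseteq\m$ held for every $f\in M^\ast$, the right-hand side would lie in $\m(M\otimes_RN)$, a contradiction. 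Hence some $f\in M^\ast$ is surjective onto $R$, and $M\cong R\oplus M'$ with $\mu(M')=\mu(M)-1$.

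\emph{Step 3: induction.} Induct on $\mu(M)$; the case $M=0$ is trivial. It remains to check that $M'$ satisfies the hypotheses with the same $L$ and $N$.
\begin{itemize}
\item For the Tor condition, the minimal presentation of $M$ is that of $M'$ plus a free summand in $F_0$. Hence $\tr M\cong\tr M'$ up to free summands, so $\Tor_1^R(\tr M',L)=0$.
\item For the map in (1), let $\iota\colon M'\to M$ and $p\colon M\to M'$ be the inclusion and projection, and put $\pi'=(p\otimes N)\circ\pi\colon L\to M'\otimes_RN$. This $\pi'$ does not depend on $n$.
\item For (2), given $n$ and $\phi'\colon M'\to M'\otimes_RN$, $m\mapsto m\otimes n$, we have $\phi'=(p\otimes N)\circ\phi\circ\iota$. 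So $\psi'=\psi\circ\iota$ satisfies $\phi'=\pi'\circ\psi'$.
\end{itemize}
By induction $M'$ is free, and therefore so is $M$.

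I expect the main point needing care to be the exactness of $\theta$'s cokernel sequence in Step 1, together with the identification of $\tr M'$ with $\tr M$ modulo free summands. Both are standard, but they are what make the induction legitimate.
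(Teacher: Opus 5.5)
Your proposal is correct and follows the paper's proof essentially step for step. You use the Auslander--Bridger surjection $M^\ast\otimes_R L\to\Hom_R(M,L)$ coming from $\Tor_1^R(\tr M,L)=0$ to produce a surjective $f\in M^\ast$, split off $R$, and induct on $\mu(M)$ with $\pi'=(p\otimes N)\circ\pi$ and $\psi'=\psi\circ\iota$. The only cosmetic difference is that you test one element $x\otimes n\notin\m(M\otimes_R N)$, whereas the paper shows $M\otimes_R N=I(M\otimes_R N)$ for the trace ideal $I$ and then rules out $I\subseteq\m$.
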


\begin{proof}
We prove the lemma by induction on the minimal number of generators $\mu$ of $M$. The case $\mu=0$ is obvious. Suppose $\mu>0$. Let $I$ be the image of the evaluation map $\lambda: M\otimes_R M^\ast\to R: m\otimes f\to f(m)$. There exists a natural exact sequence $M^\ast\otimes_R L \xrightarrow{\alpha}\Hom_R(M,L) \to \Tor_1^R(\tr M, L)\to 0$, where $\alpha(f\otimes l)(m)=f(m)l$ for $f\in M^\ast, l\in L, m\in M$; see \cite[Proposition 2.8(b)]{AB}. As $\Tor_1^R(\tr M, L)=0$, $\alpha$ is surjective. 

We claim that $M\otimes_R N=I(M\otimes_R N)$. For every $n\in N$ and the natural homomorphism $\phi: M \to M\otimes_R N: m\mapsto m\otimes n$, there exists $\psi: M\to L$ such that $\phi=\pi \circ\psi$. Since $\alpha$ is surjective, there are $f_1,\ldots, f_s\in M^\ast$ and $l_1,\ldots, l_s\in L$ such that $\psi=\alpha(\sum_{j=1}^s f_j \otimes_R l_j)$, which means that $\psi(m)=\sum_{j=1}^s f_j(m)l_j$ for all $m\in M$. By this, for each $m\in M$, we have $$f_1(m),\ldots,f_s(m) \in I {\rm \ and \ } m\otimes n=\phi(m)=\pi(\psi(m))=\sum_{j=1}^s f_j(m)\pi(l_j)\in I (M\otimes_R N).$$
If $I\subseteq \m$, then there exists a surjection $R/I\otimes_R M\otimes_R N \to k\otimes_R M\otimes_R N \cong k^{\oplus \mu}\otimes_R N$, contradicting the assumptions $\mu>0$ and $N\otimes_R k\ne 0$  since $R/I\otimes_R M\otimes_R N\cong (M\otimes_R N)/I(M\otimes_R N)=0$. Hence $I=R$, which implies that there is $f\in M^\ast$ such that $\image f \nsubseteq \m$. Then $f$ is surjective, and thus it is split. One has $M\cong M'\oplus R$ for $M'=\ker f$. By the definition of the transpose, one has $\tr M\cong\tr M'$ up to free summands, and thus $\Tor_1^R(\tr M', L)=0$. For the natural split epimorphism $\rho: M\to M'$, we put $\pi'=(\rho\otimes \id_N) \circ \pi$. Let $n\in N$. For the natural homomorphism $\phi: M \to M\otimes_R N: m\mapsto m\otimes n$, there exists $\psi: M\to L$ such that $\phi=\pi \circ\psi$ by assumption.  For the natural split monomorphism $\iota: M'\to M$, the composition $\psi \circ \iota: M' \to L$ satisfies 
$$(\pi'\circ(\psi \circ \iota))(m')=((\rho\otimes \id_N) \circ \pi\circ\psi \circ \iota)(m')=((\rho\otimes \id_N)\circ \phi \circ \iota)(m')=m' \otimes n.$$
From the above, $L, M', N$, and $\pi'$ also satisfy the assumptions of the lemma. Since the minimal number of generators of $M'$ is $\mu-1$, by the induction hypothesis, $M'$ is free, and thus so is $M$.
\end{proof}

We now state the main theorem of this paper.

\begin{thm}\label{GfdGid}
Let $(R,\m,k)$ be a local ring of dimension $d$ with a dualizing complex $D$. If there exists a non-acyclic complex $X$ of $R$-modules with $\Gfd_R X<\infty$, $\Gid_R X<\infty$, and $H_n(X)\otimes_R k\ne 0$ for $n={\rm inf}\{i \mid H_i(X)\ne 0\}$, then $R$ is Gorenstein.
\end{thm}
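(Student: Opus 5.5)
The plan is to reduce, via Foxby equivalence (Remark \ref{Well-known fact}), to showing that a finitely generated module built from a free resolution of $D$ is free, and to get that freeness from Lemma \ref{apdtfreelem}. Shift $X$ so that $\inf\{i\mid H_i(X)\ne 0\}=0$. Set $Y=\RHom_R(D,X)$. Since $\Gid_R X<\infty$, Remark \ref{Well-known fact} gives $\Gpd_R Y<\infty$, and hence $\Gfd_R Y<\infty$. Since $X$ lies in the Bass class, we also have $X\simeq D\otimes^{\L}_R Y$. Replace $D$ by a minimal free resolution $P$, which is bounded to the right and has finitely generated free terms. Let $M$ be a suitable cokernel (a syzygy-type module) of $P$ in low degree, chosen so that $M$ is free exactly when $\pd_R D<\infty$. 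Once $\pd_R D<\infty$ is known, $D\simeq R$ up to shift, so $R$ is Gorenstein.

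The second step checks the hypotheses of Lemma \ref{apdtfreelem}. Take $N=H_0(X)$, so that $N\otimes_R k\ne 0$ by hypothesis. Take $L$ to be a module read off in the appropriate degree from $Y$, or from a bounded Gorenstein flat replacement of it. Take $\pi:L\to M\otimes_R N$ to be induced by $P\otimes_R Y\to X$ and the identification of the lowest homology $H_0(P\otimes Y)$. The Tor condition $\Tor_1^R(\tr M,L)=0$ should follow from finiteness of Gorenstein flat dimension in the form $\Tor_{\gg0}^R(-,Y)$ vanishing against modules of finite injective dimension or finite flat dimension. To make $\Tor_1$ itself vanish, I would first replace $M$ by a high syzygy and $L$ by the corresponding Gorenstein flat syzygy, using that $\tr$ of a syzygy is a cosyzygy up to free summands. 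The remaining point is that $\tr M$ behaves well because $M$ comes from the dualizing complex. Here I would use the second ``two-out-of-three'' remark: tensoring with bounded free complexes preserves finiteness of Gorenstein dimensions.

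The third step, which I expect to be the main obstacle, is the lifting condition (2). For each $n\in N$, the map $\phi:M\to M\otimes N$, $m\mapsto m\otimes n$, must factor through $\pi$. The idea is that $n\in H_0(X)=H_0(D\otimes^{\L}Y)$ corresponds to a morphism $R\to X$ in the derived category. Applying $\RHom(D,-)$ gives $\RHom(D,R)\to Y$, and combining this with the evaluation/composition $P\otimes\RHom(D,R)$ produces a chain map from $P$ into $P\otimes Y$ that lifts $\phi$ on the level of the cokernel $M$. The factorization $\psi:M\to L$ is then read off degreewise. The serious difficulty is that $\RHom(D,R)$ need not be concentrated in one degree, and that chain-level lifts exist only up to homotopy. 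Boundedness of $P$ to the right and the choice of low degree should make the relevant terms match. If a direct lift fails, I would enlarge $L$, for instance to a direct sum with a cycle module of $P\otimes Y$, so that $\pi$ absorbs the homotopy ambiguity while the Tor vanishing is kept. With (1) and (2) in place, Lemma \ref{apdtfreelem} gives that $M$ is free, so $\pd D<\infty$ and $R$ is Gorenstein.
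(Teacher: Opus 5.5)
Your outline has a genuine gap. The clearest sign is that the hypothesis $\Gfd_R X<\infty$ plays no role in it: $Y=\RHom_R(D,X)$, its finite $\Gpd$ and $\Gfd$, and $X\simeq D\otimes^{\L}_R Y$ all come from $\Gid_R X<\infty$ alone. But the conclusion fails if the Gorenstein flat hypothesis is dropped. If $R$ is not Gorenstein, $X=D$ is non-acyclic and $\Gid_R D\le\id_R D<\infty$. Its lowest homology is a nonzero finitely generated module, so $H_n(D)\otimes_R k\ne 0$ by Nakayama. Hence no completion of the outline as written can succeed.

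The step that actually breaks is $\Tor_1^R(\tr M,L)=0$. Gorenstein flat modules are $\Tor$-orthogonal to modules of finite injective dimension (or, in high degrees, finite flat dimension). But $\tr M$ is merely a finitely generated module with no such finiteness, and replacing $M$ and $L$ by high syzygies does not change that. Condition (2) of Lemma \ref{apdtfreelem} is also left open, and ``enlarging $L$'' would put the $\Tor$ condition at risk.

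The missing idea is to choose $L$ on the Gorenstein injective side.

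\begin{enumerate}[\rm(1)]
\item Let $F$ be a minimal free resolution of $D$, $A$ its hard truncation in degrees $0,\dots,-d$, and $M=\cok(\partial_2^F)$. There is an exact triangle $M\to A\to D\to M[1]$.
\item Tensor with $X$. Then $A\otimes^{\L}_R X$ has finite $\Gid$ because $\Gid_R X<\infty$. Also $D\otimes^{\L}_R X$ has finite $\Gid$ by Foxby equivalence because $\Gfd_R X<\infty$; this is where that hypothesis enters. So $\Gid_R(M\otimes^{\L}_R X)<\infty$.
\item Take a projective resolution $P$ of $X$ (shifted so that $n=0$), an injective resolution $I$ of $M\otimes_R P$, and set $L=\ker\partial_0^I$. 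Then $\Gid_R L<\infty$ and $\inf L\ge 0$, so $L$ lies in the Bass class and $H_{-1}(\RHom_R(D,L))=0$.
\item Via $\Hom_R(F,L)\cong F^\ast\otimes_R L$, this vanishing is the exactness of $(F_0)^\ast\otimes_R L\to(F_1)^\ast\otimes_R L\to(F_2)^\ast\otimes_R L$. That exactness yields exactly $\Tor_1^R(\tr M,L)=0$.
\item Set $N=H_0(X)$ and let $\pi\colon L\to H_0(I)\cong M\otimes_R N$ be the natural map. Condition (2) is then automatic. For $\overline{p}\in N$ with $p\in P_0$, the chain map $M\to M\otimes_R P$, $m\mapsto m\otimes p$, followed by $M\otimes_R P\to I$, lands in the cycles $\ker\partial_0^I=L$. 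This gives $\psi$ with $\pi\circ\psi=\phi$, and no homotopy ambiguity arises.
\end{enumerate}
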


\begin{proof}
After applying a suitable shift, we may assume $n=0$. Then we can take a projective resolution $P=(\cdots \to P_1\to P_0\to 0)$ of $X$, and regard $(-)\otimes_R P$ as a model for $(-)\otimes_R^{\L} X$.

Let $F=(\cdots \to F_m \xrightarrow{\partial_{m}^F} F_{m-1}  \xrightarrow{\partial_{m-1}^F}  \cdots \xrightarrow{\partial_{-d+2}^F}  F_{-d+1} \xrightarrow{\partial_{-d+1}^F}  F_{-d} \to 0)$ be a minimal free resolution of $D$, so there is a quasi-isomorphism $F\to D$ and $F_i$ are finitely generated free $R$-modules for all $i\in\mathbb{Z}$. Set $A=(0\to F_0 \to \cdots \to F_{-d}\to 0)$ and $M=\cok(\partial_2^F)$. Since $H_i(F)=H_i(D)=0$ for $i>0$, by taking a hard truncation 
\[
  \xymatrix@C=20pt@R=15pt
  {
    A = ( 
    \cdots \ar[r]
    & 0 \ar[r] \ar[d]
    & 0 \ar[r] \ar[d]
    & F_0 \ar[r]^{\partial_0^F} \ar@{=}[d]
    & \cdots \ar[r]^{\partial_{-d+1}^F}
    & F_{-d} \ar[r] \ar@{=}[d] 
    & 0 )\\
    F= (\cdots \ar[r]
    & F_2 \ar[r]^{\partial_2^F} \ar@{=}[d]
    & F_1 \ar[r]^{\partial_1^F} \ar@{=}[d]
    & F_0 \ar[r]^{\partial_0^F} \ar[d]
    & \cdots \ar[r]^{\partial_{-d+1}^F}
    & F_{-d} \ar[r] \ar[d] 
    & 0 )\\
    M[1] \simeq (\cdots \ar[r]
    & F_2 \ar[r]^{\partial_2^F}
    & F_1 \ar[r]
    & 0 \ar[r]
    & \cdots \ar[r]
    & 0 \ar[r]
    & 0 ),\\
  }
\]
we obtain an exact triangle $M\to A\to D\to M[1]$. Since $A$ is a bounded complex of finitely generated free $R$-modules, $\Gid_R X<\infty$ gives $\Gid_R(A\otimes_R^{\L} X)<\infty$, and it follows from $\Gfd_R X<\infty$ and Foxby equivalence that $\Gid_R(D\otimes_R^{\L} X)<\infty$; see Remark \ref{Well-known fact}. Applying the two-out-of-three property to the exact triangle $M\otimes_R^{\L} X\to A\otimes_R^{\L} X\to D\otimes_R^{\L} X\to (M\otimes_R^{\L} X)[1]$, we obtain $\Gid_R(M\otimes_R^{\L} X)<\infty$ again by Remark \ref{Well-known fact}. In particular, $M\otimes_R^{\L} X$ is homologically bounded.

Let $I=(0\to I_s \xrightarrow{\partial_{s}^I} I_{s-1} \xrightarrow{\partial_{s-1}^I} \cdots)$ be an injective resolution of $M\otimes_R^{\L} X$, that is, there is a quasi-isomorphism $M\otimes_R P=M\otimes_R^{\L} X\to I$ and $I_i$ are injective for all $i\in\mathbb{Z}$. Since $P_i=0$ for $i<0$, one has $H_i(I)\cong H_i(M\otimes_R^{\L} X)=H_i(M\otimes_R P)=0$ for $i<0$. Putting $B=(0\to I_s \to \cdots \to I_1\to 0)$ and $L=\ker(\partial_0^I)$, we obtain an exact triangle $L\to M\otimes_R^{\L} X\to B\to L[1]$ by taking a hard truncation
\[
  \xymatrix@C=20pt@R=15pt
  {
    L \simeq (0 \ar[r]
    & 0 \ar[r] \ar[d]
    & \cdots \ar[r]
    & 0 \ar[r] \ar[d]
    & I_0 \ar[r]^{\partial_0^I} \ar@{=}[d]
    & I_{-1} \ar[r] \ar@{=}[d] 
    & \cdots )\\
    I= (0 \ar[r]
    & I_s \ar[r]^{\partial_s^I} \ar@{=}[d]
    & \cdots \ar[r]^{\partial_2^I} 
    & I_1 \ar[r]^{\partial_1^I} \ar@{=}[d]
    & I_0 \ar[r]^{\partial_0^I} \ar[d]
    & I_{-1} \ar[r] \ar[d] 
    & \cdots )\\
    B = (0 \ar[r]
    & I_s \ar[r]^{\partial_s^I}
    & \cdots \ar[r]^{\partial_2^I}
    & I_1 \ar[r]
    & 0 \ar[r]
    & 0 \ar[r]
    & \cdots ).\\
  }
\]
As $B$ is a bounded complex of injective $R$-modules, one has $\id_R B<\infty$. Combining this with $\Gid_R(M\otimes_R^{\L} X)<\infty$, we get $\Gid_R L<\infty$; see Remark \ref{Well-known fact}. By \cite[Theorem 4.4]{CFrH}, $L$ belongs to the Bass class for the dualizing complex $D$. Adopting the notation of \cite[Proposition 4.8(c)]{Chr}, we have ${\rm inf} L\ge 0$, ${\rm sup}D=0$, and $0\le {\rm inf} L-{\rm sup} D \le {\rm inf}(\RHom_R(D,L))$. In particular, the natural isomorphism $F^\ast \otimes_R L\cong\Hom_R(F,L)$ shows that $H_{-1}(F^\ast \otimes_R L)\cong H_{-1}(\Hom_R(F,L))=H_{-1}(\RHom_R(D,L))=0$ holds, which implies that
$$
(F_0)^\ast \otimes_R L \xrightarrow{(\partial_1^F)^\ast \otimes L} (F_1)^\ast \otimes_R L\xrightarrow{(\partial_2^F)^\ast \otimes L}  (F_2)^\ast \otimes_R L
$$
is exact. The exact sequence $ (F_1)^\ast\xrightarrow{(\partial_2^F)^\ast}  (F_2)^\ast\to \tr M\to 0$ induces a commutative diagram
\[
  \xymatrix@C=25pt@R=25pt
  {
    & (F_0)^\ast \otimes_R L  \ar[r] \ar[rd]^0 
    & (F_1)^\ast \otimes_R L \ar[r] \ar@{->>}[d]
    & (F_2)^\ast \otimes_R L \ar[r] \ar@{=}[d]
    & \tr M\otimes_R L \ar[r] \ar@{=}[d]
    & 0 \\
    0  \ar[r]
    & \Tor_1^R(\tr M, L)  \ar[r]
    & \image(\partial_2^F)^\ast\otimes_R L  \ar[r]^\theta
    & (F_2)^\ast \otimes_R L \ar[r]
    & \tr M\otimes_R L \ar[r] 
    & 0
  }
\]
with exact rows. A diagram chase shows that $\Tor_1^R(\tr M, L)\cong\ker\theta=0$.

We put $N=\cok(P_1\to P_0)$ and show that $R$-modules $L$, $M$, and $N$, together with the composition
$$
\pi: L\to H_0(I)\cong H_0(M\otimes_R P)=\cok(M\otimes_R P_1\to M\otimes_R P_0)\cong M\otimes_R \cok(P_1\to P_0)=M\otimes_R N
$$ 
of the natural epimorphism $L\to H_0(I)$ with the inverse of the isomorphism induced by the above quasi-isomorphism $M\otimes_R P=M\otimes_R^{\L} X \to I$ satisfy the hypotheses of Lemma \ref{apdtfreelem}. By the hypotheses of the theorem and the preceding argument, we have $N\otimes_R k\cong H_0(X)\otimes_R k\ne 0$ and $\Tor_1^R(\tr M, L)=0$. Let $\overline{p}\in H_0(P)$ be represented by an element $p\in P_0$. Then the natural homomorphism $M\to M\otimes_R P_0:m\mapsto m\otimes p$ induces a homomorphism $\alpha: M\to M\otimes_RP$ of complexes. The induced map on homology is the morphism $\phi:M\to M\otimes_R N$ appearing in Lemma \ref{apdtfreelem}. On the other hand, the composition $M\to I$ of the quasi-isomorphism $M\otimes_R P\to I$ with $\alpha$ is a morphism of complexes, and hence its image is contained in $L$. The induced map $\psi\colon M\to L$ satisfies $\phi=\pi \circ\psi$.

Lemma \ref{apdtfreelem} shows that $M$ is free. Since $A$ is a bounded complex of finitely generated free $R$-modules, the exact triangle $M\to A\to D\to M[1]$ yields $\pd_R D<\infty$, which means that $R$ is Gorenstein.
\end{proof}

As a consequence of Theorem \ref{GfdGid}, we obtain the following corollary. It gives an affirmative answer to Question \ref{quesmod}. Moreover, it provides a complex analogue of Proposition \ref{fdid}, thereby bringing this characterization of Gorenstein rings to its natural level of generality.

\begin{cor}\label{main cor}
Let $(R,\m,k)$ be a local ring. If there exists a complex $X$ of $R$-modules with $\Gfd_R X<\infty$,  $\Gid_R X<\infty$ and $\depth_R X<\infty$, then $R$ is Gorenstein.
\end{cor}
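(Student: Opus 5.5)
The plan is to reduce Corollary \ref{main cor} to Theorem \ref{GfdGid}. Two hypotheses of the theorem are not available: $R$ need not have a dualizing complex, and we are given finite depth instead of $H_n(X)\otimes_R k\neq 0$ at the bottom homology.

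For the first issue, pass to the $\m$-adic completion $\widehat R$, which is a homomorphic image of a regular local ring and so has a dualizing complex. Since $R$ is Gorenstein exactly when $\widehat R$ is, it is enough to produce a complex over $\widehat R$ satisfying the hypotheses of Theorem \ref{GfdGid}.

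The natural candidate is $Y=\RHom_R(k,X)$, or better a Koszul-type complex built from $X$. Rather than $\widehat R\otimes_R X$, I would use $K\otimes_R X$, where $K$ is the Koszul complex on a system of generators of $\m$. The steps are as follows.

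\begin{enumerate}[(i)]
\item Since $K$ is a bounded complex of finitely generated free modules, Remark \ref{Well-known fact} shows that $\Gfd_R(K\otimes_R X)<\infty$ and $\Gid_R(K\otimes_R X)<\infty$.
\item The homology of $K\otimes_R X$ is annihilated by $\m$-powers. It is bounded, since $X$ is homologically bounded.
\item By the standard depth--Koszul relation, $\depth_R X<\infty$ means exactly that $K\otimes_R X$ is not acyclic.
\end{enumerate}

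Next I would replace $X$ by a complex whose lowest homology is a nonzero module over the artinian ring $R/\m^t$. Such a module is $\m$-torsion and nonzero, so its socle is nonzero. The condition actually needed is $H_n\otimes_R k\neq 0$, and for a nonzero $\m$-torsion module this can fail (e.g.\ $E(k)$ is divisible-like). To avoid this, I would take $Y=\RHom_R(K,X)\simeq K\otimes_R X[-\text{shift}]$. Its homology is still $\m$-torsion, but it is also finitely built from $X$ in a way that lets one check its lowest homology via the top Koszul homology, $\Hom$-like: $H_{\inf}=\Hom_R(R/(\boldsymbol x), H_{\inf}X')$-type. I would aim to show that $H_{\inf}(K\otimes X)$ surjects onto something with $\otimes k\ne 0$ using the right-exact end $H_{0}(\boldsymbol x; H_{\inf}X)=H_{\inf}X/\m H_{\inf}X$ after shifting.

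Concretely, if $\depth_R X<\infty$, the condition $H_n(X)\otimes k\neq 0$ at the bottom should be arranged by replacing $X$ with $K\otimes_R X$. The lowest nonvanishing homology of $K\otimes_R X$ sits at the bottom, where $K_0\otimes$ maps onto a quotient $H/\m H$ modulo images. So the aim is to check that the lowest homology $H_m(K\otimes X)$ satisfies $H_m\otimes k\neq 0$. Because this homology is $\m$-torsion, it is not automatically finitely generated, and this is the main obstacle.

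The first fallback is to iterate: tensor once more with $K$ and use that $K\otimes_R K$ is a direct sum of shifts of $K$ over $k$-like behaviour. The lowest homology of $K\otimes_R(K\otimes_R X)$ then contains a copy of $k$-vector space terms $H_m\otimes_R k$ via the bottom spectral term $H_0(\boldsymbol x;H_m)=H_m/\m H_m$. If this is zero for all choices, Nakayama-type arguments for $\m$-torsion complexes (derived Nakayama: $k\otimes^{\L}_R Y\simeq 0$ forces $\RHom(k,Y)\simeq0$ for $\m$-torsion $Y$) contradict non-acyclicity.

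Finally I would base change to $\widehat R$. $\m$-torsion complexes are unchanged, and finiteness of $\Gfd$ and $\Gid$ should pass to $\widehat R$ for $\m$-torsion complexes via the Auslander/Bass class descriptions of \cite{CFrH}. Then Theorem \ref{GfdGid} gives that $\widehat R$ is Gorenstein, hence $R$ is Gorenstein.
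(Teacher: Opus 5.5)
\textbf{Verdict: there is a genuine gap.} Your outline matches the paper's: replace $X$ by $Y=K\otimes_R X$ with $K$ the Koszul complex on generators of $\m$, pass to $\widehat R$, and apply Theorem \ref{GfdGid}. However, the step you call the main obstacle is never resolved.

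\textbf{The missing fact.} Multiplication by each generator $x_i$ is null-homotopic on $K$. Hence it is also null-homotopic on $K\otimes_R P$, where $P$ is a projective resolution of $X$. So $x_i$ acts as zero on every $H_j(Y)$. This means the homology of $Y$ is annihilated by $\m$ itself, not merely by powers of $\m$ as in your step (ii). Every $H_j(Y)$ is therefore a $k$-vector space, and $H_n(Y)\otimes_R k=H_n(Y)\neq 0$ for $n=\inf\{i\mid H_i(Y)\neq 0\}$ holds automatically once $Y$ is non-acyclic.

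\textbf{Your workarounds do not close the gap.}
\begin{enumerate}[(i)]
\item $\RHom_R(K,X)$ is just a shift of $K\otimes_R X$, so switching to it changes nothing.
\item Observing that the bottom homology of $K\otimes_R(K\otimes_R X)$ is $H_m/\m H_m$ gives nothing in exactly the case $H_m/\m H_m=0$ that worries you. You also give no reason why the iteration terminates.
\item Derived Nakayama only yields $k\otimes^{\mathbf{L}}_R Y\not\simeq 0$. That places non-vanishing somewhere, not at the lowest homology of $Y$. You cannot replace $Y$ by $k\otimes^{\mathbf{L}}_R Y$ either, since that need not have finite Gorenstein flat dimension.
\end{enumerate}

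\textbf{The base change is also only asserted.} The Auslander/Bass class characterizations of \cite{CFrH} presuppose a dualizing complex over the ring in question. $R$ need not have one, which is the whole reason for completing. Moreover, $Y$ is not a complex of $\widehat R$-modules. The paper handles this as follows:
\begin{enumerate}[(i)]
\item Replace $Y$ by $Z=\mathbf{R}\Gamma_{\m}(Y)$. This is a complex over $\widehat R$, and it is quasi-isomorphic to $Y$ over $R$ because $Y$ is derived $\m$-torsion.
\item Deduce $\Gfd_{\widehat R}Z<\infty$ from $\Gfd_R Y<\infty$ by a genuine base-change result.
\item For the injective side, use $Z\simeq\mathbf{L}\Lambda_{\m}(Z)$ over $\widehat R$ together with $\Gid_{\widehat R}\mathbf{L}\Lambda_{\m}(Z)\le \Gid_R Z$.
\end{enumerate}
The paper takes these from \cite{CFoH} (Theorems 13.4.9 and 13.1.32, Propositions 19.3.19 and 19.2.31). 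Without this input, your final step is an unproved claim rather than a routine verification.
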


\begin{proof}
Let $\xx=x_1,\ldots,x_e$ be a system of generators of $\m$, and let $K=K(\xx)$ be a Koszul complex with respect to $\xx$. Put $Y=X\otimes_R^{\L} K$. As each $x_i$ acts null-homotopically on the Koszul complex $K$, $x_i$ annihilates $H_j(Y)$ for all $i$ and $j$. So all homologies of $Y$ are $k$-vector spaces. Since $K$ is a bounded complex of finitely generated free $R$-modules, tensoring with $K$ preserves the finiteness of Gorenstein flat and injective dimensions. Hence $\Gfd_R Y<\infty$ and $\Gid_R Y<\infty$ by assumption. It follows from \cite[Corollary 12.3.23]{CFoH} that there is a quasi-isomorphism 
$$\RHom_R(k,Y)=\RHom_R(k, X\otimes_R^{\L} K)\simeq \RHom_R(k,X)\otimes_R^{\L} K \simeq \RHom_R(k,X)\otimes_k^{\L} (k\otimes_R^{\L} K).$$
Since $\depth_R X<\infty$, $\RHom_R(k,X)$ is not acyclic. As $\RHom_R(k,X)\otimes_k^{\L} (k\otimes_R^{\L} K)$ is a derived tensor product of two non-acyclic complexes over the field, it is not acyclic; hence neither is $Y$. 

Let $\widehat{R}$ denote the $\m$-adic completion of $R$. We regard $Z=\mathbf{R}\Gamma_{\m}(Y)$ as a complex over $\widehat{R}$; this is justified by \cite[Corollary 13.4.17]{CFoH}. As all homologies of $Y$ are $k$-vector spaces, $Y$ is derived $\m$-torsion; hence by \cite[Theorem 13.4.9]{CFoH}, $Z$ is quasi-isomorphic to $Y$ as a complex over $R$. We see that $Z$ is not acyclic and that all homologies of $Z$ are $k$-vector spaces. Since $\operatorname{Gfd}_R Y<\infty$, \cite[Proposition 19.3.19]{CFoH} gives $ \operatorname{Gfd}_{\widehat{R}} Z<\infty$. 
Also, $Z\simeq \mathbf{L}\Lambda_{\m}(Z)$ as a complex over $\widehat{R}$; see \cite[Theorem 13.1.32]{CFoH}. It follows from \cite[Proposition 19.2.31]{CFoH} that $\operatorname{Gid}_{\widehat{R}} Z=\operatorname{Gid}_{\widehat{R}}\mathbf{L}\Lambda_{\m}(Z)\le \operatorname{Gid}_R Z=\operatorname{Gid}_R Y<\infty$.
Since all homologies of $Z$ are $k$-vector spaces, it follows that $H_n(Z)\otimes_{\widehat{R}} k=H_n(Z)\ne 0$ for $n={\rm inf}\{i \mid H_i(Z)\ne 0\}$. Theorem \ref{GfdGid} shows that $\widehat{R}$ is Gorenstein because it admits a dualizing complex, and hence so is $R$.
\end{proof}


\end{document}